\documentclass[12pt]{article}

\usepackage{graphicx} % Required for inserting images
\usepackage{pgfplots}
\usepackage[cmex10]{amsmath}
\usepackage{amsfonts}
\usepackage{mathrsfs}
\usepackage{mathtools}
\usepackage{amsthm}        % <<< szükséges a \theoremstyle-hez
\usetikzlibrary{arrows}
\usepackage{tikz-cd}
\usepackage{quiver}
\usepackage[a4paper,left=30mm,right=30mm,top=25mm,bottom=25mm]{geometry}

\theoremstyle{plain} % dőlt (default) stílus a tételekhez
\newtheorem{theorem}{Theorem}[section]

\newtheorem{corollary}[theorem]{Corollary}

\newtheorem{problem}[theorem]{Problem}

\theoremstyle{definition} % **ez** biztosítja, hogy a Definition *nem* dőlt legyen
\newtheorem{remark}[theorem]{Remark}

\newtheorem{definition}[theorem]{Definition}

\usepackage{tikz}

\newcommand{\comment}[1]{\typeout{swallowing comment}}

\title{New results about Q and $\Delta$-spaces}
\author{János Balázs Ivanyos and Ákos Székely}
\date{July 2026}

\begin{document}

\maketitle
\begin{abstract}
 A topological space \(X\) is called a \(Q\)-space if every subset of \(X\) is a \(G_\delta\)-set, and \(X\) is a \(\Delta\)-space  if for any decreasing sequence \(\{D_n : n \in\omega\}\) of subsets of \(X\) with empty intersection there is a decreasing sequence \(\{U_n : n \in \omega\}\) of open sets with empty intersection such that \(D_n \subseteq U_n\) for all \(n \in\omega\).

Our main result shows that the following statements are equiconsistent:
\begin{itemize}
    \item[(1)] There exists a measurable cardinal;
    \item[(2)] There exists a crowded Baire \(T_1\) \(\Delta\)-space;
    \item[(3)] There exists a crowded Baire \(T_4\) \(Q\)-space;
    \item[(4)] There exists a \(T_1\) \(\Delta\)-space admitting a strictly positive probability measure vanishing on points;
    \item[(5)] There exists a \(T_3\) \(Q\)-space admitting a strictly positive probability measure vanishing on points.
\end{itemize}
This provides complete answers to some problems and partial answers to other problems that have recently appeared in the literature.

We also prove a new result concerning Lindelöf \(Q\)-spaces: if \(X\) is a \(T_3\) Lindelöf \(Q\)-space with \(w(X)\leq \mathfrak c\), then \(|X|<\operatorname{cf}(\mathfrak c)\). This yields a number of nonexistence results for large Lindelöf, locally compact, compact, and countably compact \(Q\)-spaces.

\end{abstract}
\section{Introduction}
First, recall the definition of $Q$-spaces.
\begin{definition}
A topological space $X$ is called a \emph{$Q$-space} if $X$ is non-$\sigma$-discrete and every subset of is $G_{\delta}$.
\end{definition}

Originally, Q-spaces were considered as  subspaces
of the real line, such subspaces are now usually called  $Q$\emph{-sets}. The existence of a $Q$-set clearly implies $2^{\omega_{1}}=2^{\omega}$, so, for example, under $CH$, no $Q$-set exists. On the other hand, it is known that under Martin's axiom ($MA$) and the negation of $CH$, every uncountable subset  of the real line with cardinality less than $\mathfrak{c}$ is a $Q$-set \cite{Martin}.

Later, Zoltán Balogh \cite{Balogh1} constructed  a $T_3$  $Q$-space in $ZFC$, and by further refining his method, he even gave an example of a $T_4$ paracompact $Q$-space \cite{Balogh2}.

Now recall the definition of $\Delta$-spaces.
\begin{definition}
A topological space \(X\) is a \(\Delta\)-space if for every decreasing sequence of subsets
\(\{D_n : n\in\omega\}\) of \(X\) with empty intersection, there exists a decreasing sequence of open subsets
\(\{U_n : n\in\omega\}\) of \(X\) such that for every \(n \in \omega :\) \(U_n\supseteq D_n\) and
\[
\bigcap_{n\in\omega} U_n = \emptyset.
\]
\end{definition}
It is easy to see that every $Q$-space is also a $\Delta$-space. On the other hand, if $X$ is a $Q$-space, then its \emph{Alexandroff duplicate} is a $\Delta$-space, but not a $Q$-space.

The notion of a $\Delta$\emph{-set} can be defined analogously to that of a $Q$-set: namely, it is an uncountable subset of the real line that is a $\Delta$-space. Furthermore, it follows from \cite[Corollary 4.2]{Soukup} that if $o(X)^{\omega}\leq |X|$ and $\operatorname{cf}(o(X))>\omega$, then $X$ is not a $\Delta$-space. Consequently, every $\Delta$-set has cardinality $<\mathfrak{c}$. Therefore under $MA$, every $\Delta$-set is also a $Q$-set. We note that under $MA$, every subset of the real line of cardinality $<\mathfrak{c}$ is meagre, so the following problem, which appeared in \cite{Kakol} and \cite{isr}, is natural: \emph{Does there exist a crowded Baire} \emph{$\Delta$-space?} In Section \ref{sec: mérhető}, we show that the existence of such
spaces is equiconsistent with the existence of a measurable cardinal.

We also mention that $\Delta$-spaces have recently become a topic of interest in the study of $C_p$-spaces, since it was shown in \cite{distinguished} that a Tychonov space $X$ is a $\Delta$-space if and only if the locally convex space $C_p(X)$ is \emph{distinguished}.

The structure of the paper is as follows: in Section~\ref{sec: mérhető}, as mentioned above, we study crowded Baire $Q$-spaces and $\Delta$-spaces, as well as their measure-theoretic counterparts. In Section~\ref{sec: Lindelöf}, we present a new result concerning Lindelöf $Q$-spaces.

\section{Notations}
Our set-theoretic notation and topological terminology are standard; we follow \cite{Jech} and \cite{Top}, but for the reader's convenience we recall those notions that may not be widely familiar.

\begin{enumerate}
    \item[(1)] Let $X$ be a topological space. Then the minimum cardinality of a nonempty open subset of the space is denoted by $\Delta(X)$.
    \item[(2)] Throughout, by a measure on a topological space \(X\) we mean a measure defined on a \(\sigma\)-algebra \(\mathcal{A}\subseteq\mathcal{P}(X)\). Unless stated otherwise, \(\mathcal{A}\) contains the open sets of \(X\). We say that \(\mu\) is strictly positive if every nonempty open set has positive measure, and that \(\mu\) vanishes on points if \(\mu(\{x\})=0\) for every \(x\in X\).
    \item[(3)] Let $\kappa$ be a cardinal. We denote by $Fn(\kappa, 2)$ the poset of partial functions from $\kappa$ to $2$, equipped with the usual ordering, that is, $p\leq q$ iff $p\supseteq q$. Moreover, $RO(Fn(\omega\times\kappa, 2) )$ denotes the Boolean algebra of regular open sets in the complete Boolean algebra completion of $Fn(\omega\times\kappa, 2)$, which is also commonly
called the Cohen algebra.
    \item[(4)] Let $\kappa$ be a cardinal, and consider the usual product measure $\mu$ on $2^{\omega\times\kappa}$. Let $Meas(2^{\omega\times\kappa})$ be the measure algebra. Then $ \mathbb{B}(\kappa)= Meas(2^{\omega\times\kappa})/ \cal N$ denotes the  random algebra, where $\cal N$ is the ideal of $\mu$-null sets. 
\end{enumerate}

\begin{remark}\label{rem: random}
We note the following regarding (3): if we consider the Haar measure $\mu'$ on $(2^{\omega\times\kappa})$, then, on the one hand, it extends $\mu$, and on the other hand, we obtain that the complete Boolean algebra $\mathbb{B}(\kappa)$ is isomorphic to the algebra $Borel(2^{\omega\times\kappa})/ \cal N'$, where $\cal N'$ is the ideal of $\mu'$-null sets. Indeed, the   Haar measure on $2^{\omega\times\kappa}$ is strongly inner regular (open sets can be approximated from within by compact sets), for every open set $U\subseteq 2^{\omega\times\kappa}$ there exist  basic open sets $B_{n}\in Meas(2^{\omega\times\kappa})$ such that $B_{n}\subseteq U$ for $n\in\omega$, and
$$
U\setminus\big(\bigcup_{n\in\omega} B_{n}\big)\in \cal N'.
$$
Moreover, by the outer regularity of the Haar measure (that is, every Borel set can be approximated from outside by open sets), for every $C\in Borel( 2^{\omega\times\kappa})$ there exist basic open sets $B_{n,k}$ such that
$$
D=\bigcap_{n\in\omega}\big(\bigcup_{k\in\omega} B_{n,k}\big)
$$
and  $C$ and $D$ have symmetric difference in $\cal N'$.
\end{remark}
\section{On measure and category for $Q$ and $\Delta$-spaces}\label{sec: mérhető}
In this section, we answer several problems that have appeared in the literature.  \cite[Problem 6.10]{isr} asks the following: \emph{Is it true that if $X$ admits a strictly positive $\sigma$-additive measure vanishing on points, then $X$ is not a $\Delta$-space?} Recall that a measure is strictly positive if and only if it assigns positive measure to every nonempty open set.

If we interpret the above problem literally, then the answer is negative in general. Indeed, if there exists a $\Delta$-space $X$ such that every nonempty open set is uncountable (that is, $\Delta(X)\geq\omega_1$), then one can define a measure $\mu$ by assigning $\infty$ to uncountable sets and $0$ to countable ones. In this case, $\mu$ is a strictly positive $\sigma$-additive measure vanishing on points on the $\Delta$-space $X$.
Moreover, there exists a $\Delta$-space $X$, for which $\Delta(X)\geq\omega_1$. For example, consider the following space: take $\omega_{1}$ many pairwise disjoint copies of $\mathbb{Q}$, where $\mathbb{Q}$ is the set of rational numbers with the usual topology, and let ${\cal I}$ be the ideal on it consisting of the nowhere dense sets. Endow it with the following topology:
$$
\tau\coloneqq\{ \emptyset\}\cup\{ U\subseteq \bigsqcup_{i\in\omega_{1}} \mathbb{Q} \ : \ \bigsqcup_{i\in\omega_{1}} \mathbb{Q}\setminus U\in {\cal I}\}.
$$
It is easy to see that the resulting space $X$ is a $\Delta$-space with $\Delta(X)=\omega_{1}$.

We note that the above space is only $T_{1}$. However, for example, under Martin's axiom, there is a $Q$-set $X$ with $\Delta(X) \geq \omega_{1}$.

The preceding examples can be regarded as trivial from the measure-theoretic point of view. Thus, we assume that what the authors of \cite{isr} actually intended to ask was the following: \emph{Is it true that if $X$ admits a strictly positive  $\sigma$-additive \emph{probability} measure, vanishing on points, then $X$ is not a $\Delta$-space?} In what follows, we answer this question and also provide a complete solution to   \cite[Problem 4.1]{Kakol} and to the first part of  \cite[Problem 6.9]{isr}: \emph{Does every Baire $\Delta$-space have an isolated point?} We also give a partial answer to the second part of  \cite[Problem 6.9]{isr}:\emph{ Are there  Baire $Q$-spaces?}

\begin{theorem}\label{thm: eqv}
The following are equiconsistent:
\begin{itemize}
    
\item[(1)] There exists a measurable cardinal;
\item[(2)] There exists a crowded Baire $T_{1}$ $\Delta$-space;
\item[(3)] There exists a crowded Baire $T_{4}$ $Q$-space\footnote{We note that a crowded $T_1$ Baire space cannot be $\sigma$-discrete.};
\item[(4)] There exists a $T_{1}$ $\Delta$-space which admits a strictly positive probability measure  vanishing on points;
\item[(5)] There exists a $T_{3}$ $Q$-space which admits strictly positive probability measure vanishing on points.
\end{itemize}
\end{theorem}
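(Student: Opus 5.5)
The implications $(3)\Rightarrow(2)$ and $(5)\Rightarrow(4)$ are immediate, since every $Q$-space is a $\Delta$-space and $T_4$, $T_3$ imply $T_1$. It therefore suffices to prove two things: that (2) and (4) each give an inner model with a measurable cardinal, and that from a measurable cardinal we can force models of (3) and of (5).

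\emph{From (2) and (4) to a measurable cardinal.} The idea is to turn the Baire property or the measure into a σ-complete ideal on $X$ with a large quotient, and then use the known consistency strength of such ideals.

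Let $X$ be a crowded Baire $T_1$ $\Delta$-space. Every point is nowhere dense (crowded plus $T_1$), so the σ-ideal $\mathcal M$ of meager sets contains all singletons. Since $X$ is Baire, $\mathcal M$ is proper on every nonempty open set. The first key step is to show from the $\Delta$-property that every subset $A\subseteq X$ has the Baire property.
\begin{itemize}
\item Suppose $A$ is everywhere non-meager and co-non-meager in some open set $W$.
\item Enumerate a suitable countable family of sets built from $A$ into a decreasing sequence $D_n$ with empty intersection.
\item The open expansions $U_n$ given by the $\Delta$-property then produce a comeager-in-$W$ set on which $A$ is locally $G_\delta$ modulo $\mathcal M$, which is a contradiction.
\end{itemize}
Granting this, $\mathcal P(X)/\mathcal M\cong RO(X)$, so $\mathcal M$ is a σ-complete ideal whose quotient is a complete Boolean algebra.

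The second step is to show that $\mathcal M$, restricted to a suitable open set and transferred to the cardinal $\kappa=|X|$ or to $\Delta(X)$, is precipitous. For this I would use the $\Delta$-property again: an ill-founded sequence of functions in the generic ultrapower yields a decreasing sequence of sets with empty intersection. The $\Delta$-property covers this sequence by a decreasing sequence of open sets with empty intersection, and Baireness then forces a contradiction with genericity. By Jech–Magidor–Mitchell–Prikry, a precipitous ideal gives an inner model with a measurable cardinal.

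For (4) I would run the analogous argument with the null ideal of $\mu$. The $\Delta$-property should make every subset of $X$ $\mu$-measurable, so $\mu$ extends to a σ-additive probability measure on $\mathcal P(X)$ vanishing on points. This makes some cardinal $\le|X|$ real-valued measurable, and by Solovay such a cardinal is measurable in an inner model.

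\emph{From a measurable cardinal to (3) and (5).} Start from a measurable $\kappa$.
\begin{itemize}
\item For (5), force with $\mathbb B(\kappa)$. In the extension $\kappa$ is real-valued measurable with a measure $\nu$ on $\mathcal P(\kappa)$, and $2^\omega\ge\kappa$. I would put a topology on $\kappa$ (or on a set of reals of size $\kappa$) whose basic open sets are $\nu$-positive, using Remark~\ref{rem: random} to identify $\mathbb B(\kappa)$ with Borel sets modulo null in $2^{\omega\times\kappa}$. Choosing the base via a lifting-like selection of clopen representatives should make every subset a $G_\delta$ and the space $T_3$, with $\nu$ strictly positive.
\item For (3), do the Cohen analogue. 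Use a measurable collapsed so that there is an ideal $I$ whose quotient $\mathcal P/I$ is isomorphic to $RO(Fn(\omega\times\kappa,2))$. Generate a zero-dimensional topology from a dense subalgebra of $I$-positive sets. Baireness comes from the quotient being countably closed modulo $I$, crowdedness from $I$ containing points, and normality from zero-dimensionality together with a Balogh-type refinement.
\end{itemize}

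\emph{Main obstacle.} I expect the hardest step to be in the first direction: extracting the Baire property (or measurability) of all subsets and precipitousness from the bare $\Delta$-property. The second hardest is arranging the $Q$-property, that every subset is $G_\delta$, in the forcing constructions.
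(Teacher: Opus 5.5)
\textbf{There are genuine gaps in both directions.} Your argument for $\mathrm{Con}(2)\Rightarrow\mathrm{Con}(1)$ rests on a false lemma: in a crowded Baire $T_1$ $\Delta$-space, not every subset need have the Baire property.

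\emph{A counterexample.} Take any crowded Baire $T_1$ $\Delta$-space $X$ (e.g.\ the one built for (3)). Give $Y=X\times 2$ the topology generated by the sets $U\times 2$, with $U$ open in $X$, together with the cofinite subsets of $Y$.
\begin{itemize}
\item Since $X$ is crowded and $T_1$, a set is nowhere dense in $Y$ iff its projection is nowhere dense in $X$. Hence $Y$ is crowded, $T_1$ and Baire.
\item $Y$ is a $\Delta$-space. For $D_n$ decreasing with empty intersection, apply the $\Delta$-property of $X$ to the sections $D^i_n=\{x:(x,i)\in D_n\}$ for $i<2$, obtaining open $U^i_n$, and put $V_n=(U^0_n\cup U^1_n)\times 2$.
\item Yet $X\times\{0\}$ and its complement are non-meager in every nonempty open subset of $Y$. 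So $X\times\{0\}$ lacks the Baire property.
\end{itemize}
Baireness plus $\Delta$ only forbids a decreasing $\omega$-sequence of locally dense sets with empty intersection. It does not forbid splitting into two dense pieces, and your bulleted sketch offers no mechanism beyond that. The paper does not reprove this direction; it cites \cite[Corollary~3.2]{Soukup}.

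\emph{The case of (4).} It has the same defect. A non-measurable set yields only two disjoint sets of full outer measure (relative to a positive-measure hull), and the $\Delta$-property cannot refute two. So there is no reason every subset should become measurable. What is needed are infinitely many pairwise disjoint sets $H_n$ of full outer measure. Then $D_n=\bigcup_{k\ge n}H_k$ has empty intersection and any open $U_n\supseteq D_n$ has $\mu(U_n)=1$. A decreasing choice of such $U_n$ with $\bigcap_n U_n=\emptyset$ then contradicts continuity of $\mu$. The paper obtains the $H_n$ from \cite[Theorem~2.4]{Mertek}, assuming no real-valued measurable cardinal exists. Your ``total measure'' idea enters only when checking that theorem's hypothesis (ii), applied to a non-null $E$ all of whose disjoint pairs are separable by measurable sets.

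\textbf{The forward direction.} For $\mathrm{Con}(1)\Rightarrow\mathrm{Con}(3)$ and $\mathrm{Con}(5)$ your outline matches the paper's. You add $\kappa$ Cohen (resp.\ random) reals to a measurable $\kappa$; no collapse is needed. You use the induced $\kappa$-complete ideal $\mathcal I$, whose quotient is the Cohen (resp.\ random) algebra of the tail forcing. You topologize $\kappa$ by positive representatives of the generators. But the outline omits the device that produces the $Q$-property, which you yourself flag as an obstacle.
\begin{itemize}
\item The paper re-indexes the $2^\kappa$ generators as $A'_{I,n}$ with $I\in\mathcal I$, $n\in\omega$. It then modifies them by $\mathcal I$-sets so that $I=\bigcap_n A_{I,n}$ for every $I\in\mathcal I$, and so that any two disjoint $\mathcal I$-sets are split by some $A_\alpha$.
\item With basis $\{B_s\setminus I\}$, every $\mathcal I$-set is then closed and $G_\delta$.
\item Modulo $\mathcal I$, every subset is a countable union of sets $B_s$ (Cohen case), or a countable intersection of such unions (random case, via Remark~\ref{rem: random}). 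Hence every subset is $G_\delta$.
\end{itemize}
A lifting-type choice of clopen representatives does not achieve this, because the point is that the null sets themselves must be $G_\delta$.

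Your justification of Baireness is also incorrect, since the Cohen algebra is not countably closed. Baireness holds because $\mathcal I$ is a $\sigma$-ideal containing every nowhere dense set, while every basic set $B_s\setminus I$ is $\mathcal I$-positive. Likewise, $T_4$ comes from separating disjoint $\mathcal I$-sets by some $A_\alpha$, not from a Balogh-type refinement.
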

\begin{proof}
The implication $Con (2)\implies Con(1)$ was proved in \cite[Corollary~3.2]{Soukup}: if there exists a crowded Baire $T_{1}$ $\Delta$-space, then there is an inner model containing a measurable cardinal. Moreover, $ (3)\implies (2)$ and $(5)\implies (4)$ hold trivially. \medskip

Now we prove the implication $Con (1)\implies Con(3)$. For this, we will modify Shelah's construction in \cite{Borel}, so that the resulting space is a $Q$-space. Assume that in our ground model $V$ there is a measurable cardinal $\kappa$. Let
$$
j: V \longrightarrow M
$$
be the elementary embedding derived from the ultrapower by a measure on $\kappa$, with critical point $\kappa$, and that $M^{\kappa}\subseteq M$ and $|j(\kappa)|=2^{\kappa}$. We add $\kappa$ Cohen reals by forcing with the partial ordering $\mathbb{P}\coloneqq Fn(\omega\times\kappa,2)$. Let $G$ be $\mathbb{P}$-generic over $V$, hence in the forcing extension $V[G]$ we have $2^{\omega}=\kappa$. By elementarity,$$j(\mathbb{P})=Fn(\omega\times j(\kappa),2)\cong Fn(\omega\times\kappa,2)\times Fn(\omega\times(j(\kappa)\setminus \kappa),2)=\mathbb{P}\times \mathbb{S},$$where $\mathbb{S}\coloneqq Fn(\omega\times(j(\kappa)\setminus \kappa),2)$. Let $H$ be $\mathbb{S}$-generic over $V[G]$, then $G\times H$ is $j(\mathbb{P})$-generic over $V$. By \cite[Proposition 9.1]{Cummings}, in $V[G\times H]$ we can lift $j$ to obtain an elementary embedding $j_{G}: V[G]\to M[G\times H]$, with $j_{G}(G)=G\times H$.

From this point on, we work in $V[G]$. We define a $\kappa$-complete ideal ${\cal I}$ on $\kappa$ as follows. 
%Let $\dot{\jmath}$ denote the canonical $\mathbb{S}$-name for the lifted embedding $j_G$. 
For $X\subseteq \kappa$ with $X\in V[G]$, we define:$$X\in{\cal I} \quad\text{iff}\quad 1_{\mathbb{S}}\Vdash^{V[G]} \kappa\notin j_{G}(\check{X}).$$Then the map defined by$$\Phi : [X]_{{\cal I}} \mapsto \| \kappa \in j_{G}(\check{X}) \|_{\mathbb{S}}$$induces an isomorphism between the Boolean algebra $P(\kappa)/{\cal I}$ and $RO(\mathbb{S})$, moreover the ideal $\mathcal I$ is $\kappa$-complete and contains the singletons, see \cite[Lemma 3.2]{Kumar}, or \cite[Chapter 17.1]{Cummings}\footnote{In the latter reference, the statement is proved for $Col(\omega,<\kappa)$, but the exact same analysis applies to the Cohen algebra.}.  

We claim that there exists a family of sets $\{ A_{\alpha} \ : \ \alpha\in2^\kappa\}\subset P(\kappa)$ such that the following hold:
\begin{itemize}
\item[(i)] If $s\in Fn(2^{\kappa}, 2)$ then
$$B_{s}\coloneqq\bigcap_{\alpha\in \mathit{dom}(s)} A^{s(\alpha)}_{\alpha} \notin {\cal I}$$
where $A^{1}_{\alpha}=A_{\alpha}$ and $A^{0}_{\alpha}=\kappa\setminus A_{\alpha}$;

\item[(ii)] For every $C\subseteq\kappa$ there exist $s_{n} \in Fn(2^{\kappa}, 2)$ for $n\in\omega$ such that
$$C/{\cal I} = \bigcup_{n\in\omega} B_{s_{n}}/{\cal I};$$

\item[(iii)] For every $I\in{\cal I}$ there exist $\alpha_{n}\in 2^{\kappa}$ such that
$$I=\bigcap_{n\in\omega} A_{\alpha_{n}};$$

\item[(iv)] For every pair $(I, J)\in {\cal I}^{2}$, if $I\cap J=\emptyset$ then there exists $\alpha\in 2^{\kappa}$ such that $I\subset A_{\alpha}$ and $J\subset A^{0}_{\alpha}$ (or vice versa). \end{itemize}

Conditions (i) and (ii) hold if we choose  an arbitrary representative family $\{ A'_{\alpha} \ : \ \alpha\in 2^{\kappa}\}\subset P(\kappa)$ from the preimage under $\Phi$ of the following generating system of $RO( \mathbb{S})$:$$\{[\langle (n,\nu), 1 \rangle]\in RO( \mathbb{S}) \ : \ n\in\omega, \ \nu\in j(\kappa)\setminus\kappa\}= \{ \Phi([A'_{\alpha}]_{\cal I}) \ : \ \alpha\in 2^{\kappa}\}.$$   Note that if $K\in[2^{\kappa}]^{\omega}$, then
$$
\bigcap_{\alpha\in K} A'_{\alpha}  \in {\cal I},
$$
since the intersection of countably many distinct generators is $0$ in $RO( \mathbb{S})$. 

Since there exists an $I \in \mathcal{I}$ with $|I| = \kappa$, it follows that $|\mathcal{I}| = 2^\kappa$. Re-index the family as
\[
\{A'_{\alpha} : \alpha \in 2^{\kappa}\}
=
\{A'_{I,n} : I \in \mathcal{I},\ n \in \omega\},
\]
with $A'_{I,n} \neq A'_{J,m}$ whenever $\langle I,n\rangle \neq \langle J,m\rangle$. Let
$$
Z_{I} = \bigcap_{n>0} A'_{I, n}\in\cal I
$$
and set $$A_{I, n}=  (A'_{I, n}\setminus Z_{I}) \cup I$$ for $n>0$. Then
$$
 \bigcap_{n\in\omega} A_{I, n} = I.
$$
Furthermore, by modifying the $A'_{I, 0}$'s using sets from $\mathcal{I}$, we can ensure that (iv) holds. Then consider the following re-indexing
\[
\{A_{I,n} : I \in \mathcal{I},\ n \in \omega\}
=
\{A_{\alpha} : \alpha \in 2^{\kappa}\}.
\]
This family has the desired properties.

Consider  the topology on $\kappa$ whose basis is the following set
$$
\{ B_{s} \setminus I  \ : \ s\in Fn ( 2^{\kappa}, 2), \ I\in {\cal I} \}.
$$
Every $I \in \mathcal I$ is closed, since if $x \notin I$, then by (iv) we can find some $\alpha \in 2^\kappa$ such that $x \in A_\alpha$ and $I \subset \kappa\setminus A_\alpha $. Furthermore, since the equivalence classes of the sets $B_s$ form a dense subset in the quotient algebra $\mathcal{P}(\kappa)/\mathcal{I}$, for every $X\subseteq\kappa$, $X\notin\mathcal{I}$ holds precisely when there exist $s\in Fn(2^{\kappa}, 2)$ and $I\in\mathcal{I}$ such that$$B_{s} \setminus I \subseteq X,$$so $\mathcal{I}$ contains all nowhere dense sets. Since ${\cal I}$ is $\omega$-complete,   it contains all the meagre sets. Thus our space is Baire.

By (ii), for every subset $C$ of the space, its symmetric difference with some union $\bigcup_{n\in\omega} B_{s_n}$ belongs to $\mathcal{I}$. By the definition of the basis, this implies that $C$ can be written as the union of an open set and a set from the ideal. Since every $\mathcal{I}$-set is a $G_\delta$-set by (iii), it follows that $C$ is also a $G_\delta$-set. Therefore, our space is a Q-space.

Let $Z$ and $Z'$ be two disjoint closed sets. Then
$$
Z =\big( \bigcup_{n} B_{s_n}\setminus I_{n} \big)\cup J, \quad \text{ and }\quad Z' =\big( \bigcup_{n} B_{s'_n}\setminus I'_{n} \big)\cup J'.
$$
Note that the closure of a $B_{s}\setminus I$ is $B_{s}$, so we may assume that
$$
Z =\big( \bigcup_{n} B_{s_n} \big)\cup J, \quad \text{ and }\quad Z' =\big( \bigcup_{n} B_{s'_n} \big)\cup J'.
$$
Assume
that none of $\bigcup_{n} B_{s_n}$, $\bigcup_{n} B_{s'_n}$, $J$, and $J'$ none is empty (the other cases are similar). Thus there exists $\alpha\in 2^{\kappa}$ such that $J\subset A_{\alpha}$ and $J'\subset A^{0}_{\alpha}$. Then
$$
\big( \bigcup_{n} B_{s_n}\big)\cup\big(  \bigcup_{n} (A_{\alpha}\setminus B_{s'_n})\big)
\quad\text{and}\quad
\big( \bigcup_{n} B_{s'_n}\big)\cup\big(  \bigcup_{n} (A^{0}_{\alpha}\setminus B_{s_n})\big)
$$
are disjoint open sets that separate $Z$ and $Z'$. This  shows that our space is $T_{4}$. This completes the proof of $Con(1)\implies Con(3)$.\medskip

Now we consider the implication $Con (1)\implies Con(5)$. As before, assume that in our ground model $V$ there is a measurable cardinal $\kappa$. Also, let
$$
j: V \longrightarrow M
$$
be the elementary embedding derived from the ultrapower by a measure on $\kappa$, with critical point $\kappa$, and that $M^{\kappa}\subseteq M$ and $|j(\kappa)|=2^{\kappa}$. We add $\kappa$ random reals by forcing with the measure algebra $\mathbb{B}(\kappa)$. Let $G$ be $\mathbb{B}(\kappa)$-generic over $V$. By elementarity,$$j(\mathbb{B}(\kappa))=\mathbb{B}(j(\kappa))\cong \mathbb{B}(\kappa)\times \mathbb{B}(j(\kappa)\setminus \kappa).$$Let $H$ be $\mathbb{B}(j(\kappa)\setminus \kappa)$-generic over $V[G]$, then $G\times H$ is $j(\mathbb{B}(\kappa))$-generic over $V$. Then in $V[G\times H]$ we can lift $j$ to obtain an elementary embedding $j_{G}: V[G]\to M[G\times H]$, with $j_{G}(G)=G\times H$. Now working in $V[G]$, we define a $\kappa$-complete ideal ${\cal I}$ on $\kappa$ as follows.  For $X\subseteq \kappa$ with $X\in V[G]$, we define:$$X\in{\cal I} \quad\text{iff}\quad 1_{\mathbb{B}(j(\kappa)\setminus \kappa)}\Vdash^{V[G]} \kappa\notin j_{G}(\check{X}).$$Then the map defined by$$\Psi : [X]_{{\cal I}} \mapsto \| \kappa \in j_{G}(\check{X}) \|_{\mathbb{B}(j(\kappa)\setminus \kappa)}$$induces an isomorphism between the Boolean algebra $P(\kappa)/{\cal I}$ and $\mathbb{B}(j(\kappa)\setminus \kappa)$, see \cite[Lemma 4.2]{Kumar} and \cite[Chapter 17.1]{Cummings}.

We claim that there exists a family of sets $\{ A_{\alpha} \ : \ \alpha\in2^{\kappa}\}\subset P(\kappa)$ such that the following hold:
\begin{itemize}
\item[(i)] If $s\in Fn(2^{\kappa}, 2)$ then
$$B_{s}\coloneqq\bigcap_{\alpha\in \mathit{dom}(s)} A^{s(\alpha)}_{\alpha} \notin {\cal I}$$ where $A^{1}_{\alpha}=A_{\alpha}$ and $A^{0}_{\alpha}=\kappa\setminus A_{\alpha}$;

\item[(ii)] For every $C\subseteq\kappa$ there exist $s_{n,k} \in Fn(2^{\kappa}, 2)$ such that
$$C/{\cal I} = \bigcap_{n\in\omega}\big(\bigcup_{k\in \omega} B_{s_{n,k}}/{\cal I}\big).$$ 

\item[(iii)] For every $I\in{\cal I}$ there exist $\alpha_{n}\in 2^{\kappa}$ such that
$$I=\bigcap_{n\in\omega} A_{\alpha_{n}};$$

\item[(iv)] For every pair $(I, J)\in {\cal I}^{2}$, if $I\cap J=\emptyset$ then there exists $\alpha\in 2^{\kappa}$ such that $I\subset A_{\alpha}$ and $J\subset A^{0}_{\alpha}$ (or vice versa).
\end{itemize}

To see this, let $\{ A'_{\alpha} \ : \ \alpha\in2^{\kappa}\}\subset P(\kappa)$  be an arbitrary representative family of the preimage under $\Psi$ of the  generating system $$\{[\langle (n,\nu), 1 \rangle]\in \mathbb{B}(j(\kappa)\setminus \kappa) \ : \ n\in\omega, \ \nu\in j(\kappa)\setminus\kappa\} =\{ \Phi([A'_{\alpha}]_{\cal I}) \ : \ \alpha\in 2^{\kappa}\}.$$ of $\mathbb{B}(j(\kappa)\setminus \kappa)$. Then (i) holds. And, as before, we can modify the elements of $\{ A'_{\alpha}  \ : \ \alpha\in 2^{\kappa}\}$ so that they satisfy (iii) and (iv). Condition (ii) is satisfied by Remark~\ref{rem: random}.

As we did above, consider on $\kappa$ the topology whose basis is the following set
$$
\{ B_{s} \setminus I \ : \ s\in Fn ( 2^{\kappa}, 2), \ I\in {\cal I} \}.
$$
Then, by (iii), every set in $\cal I$ is $G_\delta$, and by (ii), every subset of the space is a union of a $G_\delta$ set and a set from the ideal. Therefore, our space is a $Q$-space. By (iv), our space is Hausdorff, and a Hausdorff $Q$-space is also $T_3$.

Let $\mu$ be the product measure on $2^{\omega\times(j(\kappa)\setminus \kappa)}$. Then we define
$$\mu^{*}(X)\coloneqq\mu(\widetilde{ \Psi([X]_{\cal I})})$$
as the pullback measure, where $\widetilde{\Psi([X]_{\mathcal I})}$ is an arbitrary element of the equivalence class of $\Psi([X]_{\mathcal I})$ in $2^{\omega\times (j(\kappa)\setminus \kappa)}$. It is clear that the resulting $\mu^{*}$ is a strictly positive measure vanishing on points defined on all subsets of the space. This completes the proof of $Con(1)\implies Con(5)$.
\medskip

Finally, we prove the implication $Con(4) \implies Con(1)$. It is well known that the existence of a measurable
cardinal and the existence of a real-valued measurable cardinal are equiconsistent \cite{Jech}. 

We claim that if there exists a $T_1$ $\Delta$-space which admits a strictly positive probability measure $\mu$ vanishing on points, then there is a real-valued measurable cardinal. Suppose, toward a contradiction that there does not exist a real-valued measurable cardinal, but there is a $T_1$ $\Delta$-space $X$ which admits a strictly positive probability measure $\mu$ vanishing on points.

We will use the following theorem from \cite[Theorem~2.4]{Mertek}:

\medskip
\noindent\textit{Theorem~2.4 from \cite{Mertek}.} Suppose that $X$ is a set, ${\cal A}$ is a $\sigma$-algebra on it, and ${\cal I}\subseteq {\cal A}$ is a $\sigma$-ideal such that
\begin{itemize}
\item[(i)] ${\cal A}\setminus {\cal I}$ is ccc;
\item[(ii)] if $E\subseteq X$ and $E\notin \cal I$, then $E$ has two disjoint subsets $Z_{1}, Z_{2}$ that cannot be separated by elements of ${\cal A}$.
\end{itemize}
Then there exist infinitely many ppairwise disjoint full outer measure subsets $D_{n}\subseteq X$ (that is, for every $B\in\cal A$, if $D_{n}\subseteq B$ then $X\setminus B \in \cal I$).
\medskip

Let ${\cal A}$ be the $\sigma$-algebra of $\mu$-measurable subsets, and let ${\cal I}$ be the $\sigma$-ideal of null sets. \medskip
\newline
\textbf{Claim.} The triple $( X, {\cal A}, {\cal I})$ satisfies (i) and (ii). \medskip 
\newline
\emph{Proof of the claim.} Since $\mu$ is a probability measure, (i) holds trivially. 
Assume indirectly that (ii) fails, that is, there exists some $E\subseteq X$, $E\notin \cal I$, such that any two disjoint subsets of $E$ can be separated by ${\cal A}$. Since $\mu$ is a probability measure, every subset $Y$ of $X$ has a measurable hull $S(Y)\in\cal A$ (that is, $Y\subseteq S(Y)$ and if $Y\subseteq A$ for some $A\in \cal A$, then $A\setminus S(Y)\in \cal I$). 

For $Y\subseteq E$, let $F(Y)\in\cal A$ be a set that separates $Y$ from its complement in $E$. Then
$$\nu (Y):=\frac{\mu(F(Y)\cap S(E))}{ \mu(S(E))}$$
is a probability measure, defined on all subsets of $E$ and vanishing on points; that is, $|E|$ is a real-valued measurable cardinal. This contradiction proves the claim.\qed\medskip

Therefore, by   \cite[Theorem 2.4]{Mertek}, there exist infinitely many pairwise disjoint full outer measure sets $H_{n}$ in $X$. Let
$$D_{n}=\bigcup_{k\geq n} H_{k}.$$
Then
$$\bigcap_{n\in\omega} D_{n} =\emptyset,$$
and since $X$ is a $\Delta$-space, there is a decreasing sequence of open sets  $U_{n}\supseteq D_{n}$ such that
$$\bigcap_{n\in\omega}U_{n}=\emptyset.$$
Furthermore, it is clear that each $D_{n}$ has outer measure $1$, so $\mu(U_{n})=1$ for every $n\in\omega$, which is impossible by continuity of measure. This completes the proof of the theorem.
\end{proof}

Notice that in the last part of the proof above ($Con(4) \implies Con(1)$), we in fact proved the following measure-theoretic corollary, which is interesting in its own right.

\begin{corollary}

Assume that there is no real-valued measurable cardinal, and let $\mu$ be a $\sigma$-finite (and not identically zero) measure on the space $X$ that vanishes on points. Then there exist infinitely many pairwise disjoint subsets of $X$ of full outer measure.
\end{corollary}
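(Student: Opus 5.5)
We will reduce the corollary to the case handled inside the proof of Theorem~\ref{thm: eqv}, i.e.\ to \cite[Theorem~2.4]{Mertek} applied to a probability measure.

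\textbf{Normalisation.} Let $\mu$ be defined on a $\sigma$-algebra $\mathcal A$. Since $\mu$ is $\sigma$-finite and not identically zero, write $X=\bigcup_k X_k$ with $X_k\in\mathcal A$ pairwise disjoint and $\mu(X_k)<\infty$. Choose $c_k>0$ with $\sum_k c_k\mu(X_k)=1$, using $c_k$ arbitrary on the pieces of measure zero and not all pieces null. Put $\mu'(A)=\sum_k c_k\mu(A\cap X_k)$. Then $\mu'$ is a probability measure on $\mathcal A$ that vanishes on points, has the same null sets as $\mu$, and has the same measurable hulls. Hence ``full outer measure'' means the same for $\mu$ and $\mu'$: $D$ has full outer measure iff every $B\in\mathcal A$ with $D\subseteq B$ has $X\setminus B$ null. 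So we may assume $\mu$ is a probability measure. No topology is used.

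\textbf{Verifying the hypotheses of \cite[Theorem~2.4]{Mertek}.} Take $\mathcal I$ to be the $\sigma$-ideal of $\mu$-null sets in $\mathcal A$.

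Condition (i) holds because a probability measure admits no uncountable family of pairwise disjoint positive-measure sets.

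Condition (ii) is the Claim in the proof of Theorem~\ref{thm: eqv}, and we repeat its argument. Suppose some $E\notin\mathcal I$ (i.e.\ of positive outer measure, via its hull $S(E)$) had every partition $Y,\ E\setminus Y$ separated by some $F(Y)\in\mathcal A$. Then
\[
\nu(Y)=\frac{\mu(F(Y)\cap S(E))}{\mu(S(E))}
\]
is well defined: two separators differ on a set disjoint from $E$, and that set meets $S(E)$ only in a null set by minimality of the hull. One then checks that $\nu$ is countably additive on $\mathcal P(E)$. For disjoint $Y_n$, the sets $F(Y_n)\cap S(E)$ can be taken pairwise almost disjoint by the same hull argument, and $F(\bigcup Y_n)$ agrees with $\bigcup F(Y_n)$ modulo null sets on $S(E)$. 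Also $\nu$ vanishes on points since $\mu$ does. Thus $\nu$ is a total probability measure on $E$ vanishing on points, so $|E|$ (indeed some cardinal $\le|E|$) is real-valued measurable, contradicting the hypothesis.

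\textbf{Conclusion and main obstacle.} Theorem~2.4 of \cite{Mertek} now yields infinitely many pairwise disjoint sets of full outer measure, as required. The only delicate step is the well-definedness and $\sigma$-additivity of $\nu$, which needs the measurable-hull bookkeeping described above. A further subtlety is that the existence of a total measure on $E$ vanishing on points yields a real-valued measurable cardinal via the least $\kappa$ carrying such a measure. This is the standard Ulam argument, which we will cite from \cite{Jech}.
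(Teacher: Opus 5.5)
Your proposal is correct and is essentially the paper's proof: you rerun the Claim from the $Con(4)\Rightarrow Con(1)$ part of Theorem~\ref{thm: eqv} (the measure $\nu$ built from separators and the hull $S(E)$) and then invoke \cite[Theorem~2.4]{Mertek}. Your normalisation to a probability measure with the same null sets and hulls is a worthwhile addition, because the paper's one-line justification tacitly also needs ccc and $0<\mu(S(E))<\infty$ in the formula for $\nu$. The only extra care needed is to complete $\mu$ first (harmless, since full outer measure is unchanged), so that $E\notin\mathcal I$ really means that $E$ has positive outer measure, as you assume.
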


\begin{proof}
Essentially, this was proved in Theorem~\ref{thm: eqv} for $Con(4) \implies Con(1)$. The only properties of $\mu$ we used were that it vanishes on points, and that every $Y \subseteq X$ has a measurable hull. The latter always holds for $\sigma$-finite measures.
\end{proof}

\section{On the cardinality of Lindelöf $Q$-spaces}\label{sec: Lindelöf}
In this section, we study the question of whether a $Q$-space can be Lindelöf. As we have already seen, under $\mathrm{MA+\neg CH} $ there exists a $Q$-set $X\subseteq\mathbb{ R}$, and since the real line is hereditarily Lindelöf, the $Q$-space $X$ is Lindelöf. The following natural question is due to Balogh:
\begin{problem}
Does there exist a  Lindelöf \rm{Q}-space in $ZFC$?
\end{problem}
By a remark made in \cite{Q-L}, if there is a Lindelöf $Q$-space in \rm{ZFC}, then it is necessarily an $L$-space (that is, a nonseparable and hereditarily Lindelöf).  The only $L$-space known in $ZFC$ is the space $\cal L$ constructed by Moore \cite{L}. However, in \cite{Q-L} it is shown that $\cal L$ is not a $Q$-space.

It is not hard to see that if $X$ is a $T_3$ Lindelöf $Q$-space such that $w(X) \leq \mathfrak{c}$, then $|X|\leq \mathfrak{c}$. In the following theorem, we prove a stronger upper bound on the cardinality of such spaces $X$.

\begin{theorem}\label{thm: Lindelöf}
    Let $X$ be a $T_{3}$, Lindelöf $Q$-space such that $w(X) \leq \mathfrak{c}$. Then $|X| < \mathrm{cf}(\mathfrak{c})$.
\end{theorem}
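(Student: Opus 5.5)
The plan is to show that $2^{|X|}=\mathfrak c$ and then apply König's theorem $\operatorname{cf}(2^{\lambda})>\lambda$ with $\lambda=|X|$. The work is in bounding the number of subsets of $X$ by $\mathfrak c$. For this I would use the $Q$-property twice: once to see that $X$ is hereditarily Lindelöf, and once to see that every subset is determined by countably many open sets.

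First, $X$ is hereditarily Lindelöf. Let $Y\subseteq X$. Since $X\setminus Y$ is a $G_\delta$-set, $Y$ is an $F_\sigma$-set, i.e.\ $Y=\bigcup_{n\in\omega}F_n$ with each $F_n$ closed in $X$. A closed subspace of a Lindelöf space is Lindelöf, and a countable union of Lindelöf subspaces is Lindelöf, so $Y$ is Lindelöf.

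Next, I count the open sets. Fix a base $\mathcal B$ with $|\mathcal B|\le\mathfrak c$. Any open $U=\bigcup\{B\in\mathcal B: B\subseteq U\}$ is a Lindelöf subspace by the previous step. Hence this cover has a countable subcover, so $U$ is a countable union of members of $\mathcal B$. Therefore $|\tau(X)|\le |\mathcal B|^{\omega}\le\mathfrak c^{\omega}=\mathfrak c$. Since every subset of $X$ is $G_\delta$, i.e.\ a countable intersection of open sets, we get
\[
2^{|X|}=|\mathcal P(X)|\le |\tau(X)|^{\omega}\le\mathfrak c .
\]

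For the reverse inequality, $X$ is not $\sigma$-discrete by the definition of a $Q$-space, so in particular $X$ is uncountable. Then $2^{|X|}\ge 2^{\omega}=\mathfrak c$, and so $2^{|X|}=\mathfrak c$. König's theorem now gives $\operatorname{cf}(\mathfrak c)=\operatorname{cf}(2^{|X|})>|X|$, which is the claim.

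The main point to watch is the hereditary Lindelöf step, because everything else is cardinal arithmetic. That step needs only that each subset is $F_\sigma$, which is exactly what the $Q$-property supplies. I expect regularity ($T_3$) to play no essential role beyond the standing hypothesis; at most it would be used to pass to a base of regular open sets, and that is not needed for the count above.
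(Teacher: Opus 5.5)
Your proof is correct, and it takes a more direct route than the paper's. Your first two steps match the paper: $X$ is hereditarily Lindelöf because every subset is $F_\sigma$, and therefore $o(X)^{\omega}\le w(X)^{\omega}\le\mathfrak c$. The paper then goes through $C(X)$. It codes each $f$ by the open sets $[f<q_n]$ to get $|C(X)|\le o(X)^{\omega}$. It then uses that $X$ is perfectly normal (regular plus Lindelöf gives normal, and every closed set is $G_\delta$) to apply the identity $\mathrm{Baire}_1(X)=\mathrm{Borel}_2(X)$. Since every subset is $F_\sigma$, every $f\colon X\to\mathbb R$ is then a pointwise limit of a sequence of continuous functions. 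This gives $\mathfrak c^{|X|}\le|C(X)|^{\omega}\le\mathfrak c$, and König's theorem finishes. You avoid functions entirely: each subset is a countable intersection of open sets, so $2^{|X|}\le o(X)^{\omega}\le\mathfrak c$. This is the same inequality, since $\mathfrak c^{|X|}=2^{|X|}$ for infinite $X$. Your route gains economy and generality. It needs neither Urysohn's lemma nor the Baire-class theory for perfectly normal spaces, and, as you say, it never uses $T_3$. It does not really need non-$\sigma$-discreteness either, since for countable $X$ the conclusion already follows from $\mathrm{cf}(\mathfrak c)>\omega$. In effect you show that any space in which every subset is $G_\delta$ and $o(X)^{\omega}\le\mathfrak c$ satisfies $|X|<\mathrm{cf}(\mathfrak c)$. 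The paper's detour through $C(X)$ gives the extra relation $|C(X)|^{\omega}=o(X)^{\omega}$, but the theorem does not need it.
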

\begin{proof}
First note that if \(Y\) is a \(Q\)-space, then \(L(Y)=hL(Y)\). Indeed, whenever \(Z\in\mathcal F_{\sigma}(Y)\), we have \(L(Z)\leq L(Y)\), and since every subspace of a \(Q\)-space is an \(\mathcal F_{\sigma}\)-set, this applies to all \(Z\subseteq Y\).
 %   By the well known theorem of de Groot, $
%|X|\leq 2^{hL(X)}=2^{\omega}
%$ holds by $X$ being Hausdorff hereditarily Lindelöf.
%We shall see that under the present assumptions this bound can be sharpened.
    
    Let $C(X)$ denote the set of continuous, real-valued functions on X. \medskip
\newline \textbf{Claim.} For the space \(X\), the following inequality holds:
\[
o(X) \leq |C(X)| \leq o(X)^{\omega}.
\]

\vspace{0 em}
\noindent\emph{Proof of the claim.}
First, we show that \(o(X)\leq |C(X)|\). To each \(f\in C(X)\), assign the open set
\(U_f=[f>0]\). This defines a surjection onto the set of nonempty open subsets of
\(X\):  since every subset of $X$ is $\mathcal{F}_{\sigma}$, a given open subset $U \subset X$  can be written as the union of a collection $\{F_{n}: n \in \omega \}$ of closed subsets of $X$. Then, since $X$ is normal, being Lindelöf and regular, Urysohn's lemma implies that for every  $n\in \omega$ we can choose an $f_{n} \in C(X; [0,1])$ such
that
\[
F_n\subseteq [f_n=1]\qquad\text{and}\qquad X\setminus U\subseteq [f_n=0].
\]
Therefore, for the function
\[
f=\sum_{n\in\omega}2^{-n}f_n\in C(X),
\]
we have \(U_f=U\), and hence \(o(X)\leq |C(X)|\).

Next, we show that \(|C(X)|\leq o(X)^{\omega}\). Let \(\langle q_n:n\in\omega\rangle\) be
an enumeration of the rationals. To each \(f\in C(X)\) assign the sequence of open sets
\[
\bigl\langle [f<q_n]:n\in\omega\bigr\rangle.
\]
Clearly, to different functions correspond different sequences, so the mapping is injective. This proves the claim.\qed\medskip

    From this, we have $o(X)^{\omega} = |C(X)|^{\omega}$. Since $w(X) \leq 2^{\omega}$ and $X$ being hereditarily Lindelöf, it follows that $$o(X)^{\omega} \leq \mathfrak{c}.$$ 
   
    It is well known that for perfectly normal spaces, for every $\alpha < \omega_{1}$, $\mathrm{Baire}_{\alpha}(X) = \mathrm{Borel}_{\alpha+1}(X)$, where $\mathrm{Baire}_{\alpha}(X)$ is the class of Baire functions on $X$ of order $\alpha$ and $\mathrm{Borel}_{\alpha}(X)$ is the class of $f: $X$ \to \mathbb{R}$ functions, such that $[f \in U] \in \Sigma_{\alpha}^{0}(X)$, for all $U \subset \mathbb{R}$ open.
    Now, since every subset of $X$ is $\Sigma_{2}^{0}(X)$ we have
    \[
     \mathrm{Baire}_{1}(X) = \mathrm{Borel}_{2}(X) =  \overset {X}\, \mathbb{R}
    \]
    hence $|\mathrm{Baire}_{1}(X)| = \mathfrak{c}^{|X|}$.
    On the other hand, \(\mathrm{Baire}_{1}(X)\) consists of pointwise limits of
continuous functions on \(X\), so
     $|\mathrm{Baire}_{1}(X)| \leq |C(X)|^{\omega}$. Combining the inequalities obtained, we get
    \[
       \mathfrak{c}^{|X|} \leq |C(X)|^{\omega} = o(X)^{\omega} \leq \mathfrak{c}.
    \]
    By Kőnig's lemma, this implies $$|X| < \mathrm{cf}(\mathfrak{c}).$$
    \end{proof}

From the previous theorem, it follows that several types of  $Q$-spaces cannot exist. In the following corollary, we list a few.

\begin{corollary}
Let $X$ be a $T_3$ topological space with $|X|\geq \mathrm{cf}(\mathfrak{c})$. Then:
\begin{itemize}
\item[(i)] if $X$ is Lindelöf and $\chi(X)\leq \mathfrak{c}$, then $X$ is not a $Q$-space;
\item[(ii)] if $X$ is Lindelöf and locally compact, then $X$ is not a $Q$-space;
\item[(iii)] if $X$ is compact, then $X$ is not a $Q$-space;
\item[(iv)] if $X$ is countably compact, then $X$ is not a $Q$-space.
\end{itemize}
\end{corollary}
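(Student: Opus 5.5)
The plan is to reduce each item to Theorem~\ref{thm: Lindelöf} by verifying its hypotheses. We argue by contradiction: suppose $X$ is a $Q$-space with $|X|\geq \mathrm{cf}(\mathfrak c)$, and show that $X$ is Lindelöf with $w(X)\leq\mathfrak c$. The theorem then gives $|X|<\mathrm{cf}(\mathfrak c)$, which is the desired contradiction.

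For (i), $X$ is Lindelöf by hypothesis, so only $w(X)\le\mathfrak c$ needs checking. As in the proof of Theorem~\ref{thm: Lindelöf}, a $Q$-space satisfies $L(X)=hL(X)$, so $X$ is hereditarily Lindelöf. For a $T_3$ hereditarily Lindelöf space (indeed any regular space) one has $w(X)\le 2^{d(X)}$. Alternatively, fix local bases of size $\le\mathfrak c$ at every point. Every open set $U$ is then a union of basic neighbourhoods, and by hereditary Lindelöfness a countable subfamily already covers $U$. Hence it suffices that the total number of basic sets is at most $\mathfrak c$, which requires $|X|\le\mathfrak c$.

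To get $|X|\le\mathfrak c$, I would use de Groot's inequality $|X|\le 2^{hL(X)}=\mathfrak c$ for Hausdorff hereditarily Lindelöf spaces. The union of $\mathfrak c$ local bases of size $\le\mathfrak c$ then has size $\le\mathfrak c$. Since each open set is a countable union of its members, the family of countable unions of basic sets has size $\le\mathfrak c^\omega=\mathfrak c$ and contains every open set. In particular the union of local bases is itself a base of size $\le\mathfrak c$, so $w(X)\le\mathfrak c$. The key point is that $\chi(X)\le\mathfrak c$ together with $|X|\le\mathfrak c$ gives $w(X)\le\mathfrak c$, and Theorem~\ref{thm: Lindelöf} applies.

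For (ii), a locally compact Hausdorff space has $\chi(X)\le w$ of compact neighbourhoods. Rather than bound this directly, I would observe that each point $x$ is a $G_\delta$, because $X$ is a $Q$-space. In a compact Hausdorff neighbourhood, a $G_\delta$ point has countable character, so $\chi(X)=\omega\le\mathfrak c$ and (i) applies.

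For (iii), a compact space is Lindelöf and locally compact, so (ii) applies. For (iv), I would show that a countably compact $Q$-space is compact. It is hereditarily Lindelöf only once we know it is Lindelöf, so I would argue differently. Points are $G_\delta$ and all subsets are $F_\sigma$. Countable compactness combined with perfectness should give $X$ Lindelöf: a countably compact perfectly normal space is compact, and the $Q$-property makes $X$ perfect. I expect the main obstacle here, since normality is not given a priori. I would use that $T_3$ plus perfect (every open set is $F_\sigma$) yields the needed separation, and then show that every open cover has a countable subcover. Given an open cover, take a maximal disjoint refinement argument, or note that a countably compact space with a $G_\delta$-diagonal-type property has a countable network. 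Once $X$ is Lindelöf, countable compactness gives compactness, and (iii) finishes the argument.
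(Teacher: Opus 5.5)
\textbf{Parts (i)--(iii).} These are correct and follow the paper's route. In (i) you supply the step the paper leaves implicit: de Groot's inequality gives $|X|\le 2^{hL(X)}=\mathfrak c$, hence $w(X)\le |X|\cdot\chi(X)\le\mathfrak c$. The discussion of countable unions is unnecessary, since the union of all local bases is always a base. In (ii), you get first countability from the fact that points are $G_\delta$ and $\psi=\chi$ in compact Hausdorff spaces. This is the same observation as the paper's remark that hereditarily Lindelöf locally compact spaces are first countable.

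\textbf{The gap is in (iv).} You rely on the claim that a countably compact perfectly normal space is compact, but this is not a theorem of ZFC. Under $\diamondsuit$, Ostaszewski's space is perfectly normal (in particular $T_3$), countably compact, and not compact. So no argument can prove (iv) if all it extracts from the $Q$-property is perfectness (closed sets $G_\delta$, open sets $F_\sigma$), together with $T_3$ and countable compactness.

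Your fallback suggestions do not repair this. The ``maximal disjoint refinement argument'' is never specified. Nothing in your argument shows that a $Q$-space has a $G_\delta$-diagonal, which is what a Chaber-type theorem would require.

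The missing ingredient is a compactness result that uses the hypothesis on \emph{arbitrary}, non-closed subsets. The paper invokes \cite[Theorem~2.3]{Soukup}: every $T_3$ countably compact $\Delta$-space is compact. Since every $Q$-space is a $\Delta$-space, $X$ is then compact, and (iii) finishes the argument. You need to either cite this theorem or prove it; perfectness alone cannot get you there.
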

\begin{proof}
(i) If $X$ is Lindelöf with $\chi(X)\leq \mathfrak{c}$ and also a $Q$-space, then it is hereditarily Lindelöf. Hence $w(X) \leq \mathfrak{c}$ and Theorem \ref{thm: Lindelöf} applies to yield $|X| < \mathrm{cf}(\mathfrak{c})$, a contradiction.

(ii) follows from (i), since hereditary Lindelöf locally compact spaces are first countable.

(iii) follows trivially from (ii).

Finally, (iv) follows from (iii), since, as was shown in \cite[Theorem~2.3]{Soukup}, every $T_3$ countably compact $\Delta$-space is compact, and as previously mentioned, every $Q$-space is also a $\Delta$-space.
\end{proof}

\begin{center}
    { \textbf{Acknowledgements}}
\end{center} The authors are deeply grateful to Lajos Soukup for his valuable consultations and helpful remarks.

\bibliography{references}

\begin{thebibliography}{99}

\bibitem{Balogh1} {\sc Balogh, Z.} {\it There is a Q-set space in ZFC.} Proceedings of the American Mathematical Society (1991): 557--561.

\bibitem{Balogh2} {\sc Balogh, Z.} {\it There is a paracompact Q-set space in ZFC.} Proceedings of the American Mathematical Society 126 (1998): 1827--1833.

\bibitem{Cummings} {\sc Cummings, J.} {\it Iterated forcing and elementary embeddings.} Handbook of Set Theory. Dordrecht: Springer Netherlands, (2009), 775--883.

\bibitem{Top} {\sc Engelking, R.} {\it General Topology.} Heldermann Verlag, Berlin, (1989).

\bibitem{Mertek} {\sc Grzegorek, E., Labuda, I.} {\it On two theorems of Sierpiński.} Archiv der Mathematik 110.6 (2018): 637--644.

\bibitem{Jech} {\sc Jech, T.} {\it Set Theory.} Berlin, Heidelberg: Springer Berlin Heidelberg, 2003.

\bibitem{Soukup} {\sc Juhász, I., van Mill, J., Soukup, L., Szentmiklóssy, Z.} {\it Some new results on $\Delta$-spaces.} arXiv preprint arXiv:2510.04242 (2025).

\bibitem{Kakol} {\sc Kakol, J., Leiderman, A., and Tkachuk, V. V.} {\it Some applications of the $\Delta_1$-property.} Topology and its Applications (2025): 109507.

\bibitem{distinguished}  {\sc Kakol, J., and Leiderman, A.} {\it A characterization of X for which spaces $C_{p}(X)$ are
distinguished and its applications.} Proc. Amer. Math. Soc., series B, 8 (2021): 86--99.

\bibitem{Kumar} {\sc Kumar, A., and Kunen, K.} {\it Induced ideals in Cohen and random extensions.} Topology and its Applications 174 (2014): 81--87.

\bibitem{isr} {\sc Leiderman, A., and Szeptycki, P.} {\it On $\Delta$-spaces.} Israel Journal of Mathematics (2025): 1--30.

\bibitem{Q-L} {\sc Memarpanahi, P., and Szeptycki, P.} {\it $Q$-Sets, $\Delta$-Sets, and $L$-Spaces.} arXiv preprint arXiv:2501.08298 (2025).

\bibitem{Martin} {\sc Miller, A.} {\it Special subsets of the real line.} In The Handbook of Set Theoretic Topology, North Holland, 1984, 201--234.

\bibitem{L} {\sc Moore, J.} {\it A solution to the L space problem.} Journal of the American Mathematical Society 19.3 (2006): 717--736.

\bibitem{Borel} {\sc Shelah, S.} {\it A space with only Borel subsets.} arXiv preprint math/0009047 (2000).

\end{thebibliography}

\bigbreak
{\sc János Balázs Ivanyos,
Eötvös Loránd University,
Institute of Mathematics,
Pázmány Péter stny. 1/C, 1117 Budapest, Hungary}

\emph{Email address: janos.ivanyos@gmail.com}

\bigbreak
{\scÁkos Székely,
Eötvös Loránd University,
Institute of Mathematics,
Pázmány Péter stny. 1/C, 1117 Budapest, Hungary}

\emph{Email address: akos\_szekely@yahoo.com}
\end{document}